\numberwithin{equation}{section}
  \newtheorem{theorem}{Theorem}[section]
  \newtheorem{proposition}[theorem]{Proposition}
  \newtheorem{corollary}[theorem]{Corollary}
  \newtheorem{definition}[theorem]{Definition}
  \newtheorem{example}[theorem]{Example}
\title[On twisted Riemannian extensions associated with Szab\'o metrics]{On twisted Riemannian extensions associated with Szab\'o metrics}
\author[Abdoul Salam Diallo, Silas Longwap, Fortun\'{e} Massamba]{Abdoul Salam Diallo*, Silas Longwap**, Fortun\'{e} Massamba***}
\newcommand{\acr}{\newline\indent}
\address{\llap{*\,} School of Mathematics, Statistics and Computer Science\acr
 University of KwaZulu-Natal\acr
 Private Bag X01, Scottsville 3209\acr
South Africa  \acr
and \acr
Universit\'e Alioune Diop de Bambey\acr
UFR SATIC, D\'epartement de Math\'ematiques\acr
B. P. 30, Bambey, S\'en\'egal}
\email{Diallo@ukzn.ac.za, abdoulsalam.diallo@uadb.edu.sn}
\address{\llap{**\,}  School of Mathematics, Statistics and Computer Science\acr
 University of KwaZulu-Natal\acr
 Private Bag X01, Scottsville 3209\acr
South Africa} \email{longwap4all@yahoo.com}
\address{\llap{***\,} School of Mathematics, Statistics and Computer Science\acr
 University of KwaZulu-Natal\acr
 Private Bag X01, Scottsville 3209\acr
South Africa} \email{massfort@yahoo.fr, Massamba@ukzn.ac.za}
\thanks{}
\subjclass[2010]{Primary 53B05; Secondary 53B20}
\keywords{Affine connection; Cyclic parallel; Szab\'o manifold; Twisted Riemannian extension}
\begin{document}
 
\begin{abstract} 
Let $M$ be an $n$-dimensional manifold with a torsion free affine connection $\nabla$ and let $T^*M$ be the cotangent bundle. 
In this paper, we consider some of the geometrical aspect of a twisted Riemannian extension which provide a link between
the affine geometry of $(M,\nabla)$ and the neutral signature pseudo-Riemannian geometry of $T^*M$. We investigate the
spectral geometry of the Szab\'o operator on $M$ and on $T^*M$.  
\end{abstract} 
 
\maketitle

\section{Introduction}

Let $M$ be an $n$-dimensional manifold with a torsion free affine connection and let $T^* M$ be the cotangent bundle. 
In \cite{PattersonWalker}, Patterson and Walker introduced the notion of \textit{Riemann extensions} and showed how a 
pseudo-Riemannian metric can be given to the $2n$-dimensional cotangent bundle of an $n$-dimensional manifold with given 
non-Riemannian structure. They shows that Riemann extension provides a solution of the general problem of embedding a
manifold $M$ carrying a given structure in a manifold $N$ carrying another structure, the embedding being carried out in 
such a way that the structure on $N$ induces in a natural way the given structure on $M$. The Riemann extension  can be 
constructed with the help of the coefficients of the affine connection.

The Riemann extensions which are pseudo-Riemannian metrics of neutral neutral signature shown its importance in relation to the Osserman manifolds \cite{gar}, Walker manifolds \cite{broz} and non-Lorentzian geometry. In \cite{broz}, the authors generalize the usual Riemannian extensions to the so-called \textit{twisted Riemannian extensions}. The latter is also called \textit{deformed Riemannian extension} (see \cite{GarciaGilkeyNikcevicLorenzo} for more details).  In \cite{broz,GarciaGilkeyNikcevicLorenzo}, the authors studied the spectral geometry of the Jacobi operator and skew-symmetric curvature operator both on $M$ and on $T^{*}M$. The results on these operators are detailed, for instance, in \cite[Theorem 2.15]{GarciaGilkeyNikcevicLorenzo}. 

In this paper, we shall consider some of the geometric aspects of twisted Riemannian extensions and we will investigate the spectral geometry of the Szab\'o operator on $M$ and on $T^*M$. Note that the Szab\'o operator has not been deeply studied like the Jacobi and skew-symmetric curvature operators. 

Our paper is organized as follows. In the section \ref{prel}, we recall some basic definitions and results on the classical Riemannian extension and the twisted Riemannian extension developed in the books \cite{broz,GarciaGilkeyNikcevicLorenzo}. Finally in section \ref{Szabo}, we investigates the spectral geometry of the Szab\'o operator on $M$ and on $T^*M$, and we construct two examples of pseudo-Riemannian Szab\'o metrics of signature $(3,3)$, using the classical and twisted Riemannian extensions, whose Szab\'o operators are nilpotent.  

Throughout this paper, all manifolds, tensors fields and connections are always assumed to be differentiable of class $\mathcal{C}^{\infty}$.

\section{Twisted Riemannian extension}\label{prel}

Let $(M,\nabla)$ be an $n$-dimensional affine manifold and $T^* M$ be its cotangent bundle and let 
$\pi: T^* M \rightarrow M$ be the natural projection defined by $\pi(p,\omega)=p\in M$ and $(p,\omega)\in T^*M$.
A system of local coordinates $(U,u_i), i=1,\cdots,n$ around $p\in M$ induces a system of local
coordinates $(\pi^{-1}(U), u_i, u_{i'}=\omega_i), i'=n+i=n+1,\cdots,2n$ around $(p,\omega) \in T^*M$,
where $u_{i'}=\omega_i$ are components of covectors $\omega$ in each cotangent space $T^{*}_{p} M$,
$p\in U$ with respect to the natural coframe $\{du^i\}$. If we use the notation $\partial_i =\frac{\partial}{\partial u_i}$
and $\partial_{i'}=\frac{\partial}{\partial \omega_i}, i=i,\cdots,n$ then at each point $(p,\omega)\in T^* M$,
its follows that
$$
 \{(\partial_1)_{(p,\omega)},\cdots,(\partial_n)_{(p,\omega)},(\partial_{1'})_{(p,\omega)},\cdots,
(\partial_{n'})_{(p,\omega)} \},
$$
is a basis for the tangent space $(T^* M)_{(p,\omega)}$.
 
For each vector field $X$ on $M$, define a function $\iota X: T^{*}M\longrightarrow \mathbb{R}$ by 
$$
\iota X(p,\omega) = \omega(X_{p}).
$$ 
This function is locally expressed by,  
$$ 
\iota X (u_i,u_{i'})= u_{i'} X^i,
$$ 
for all $X=X^{i}\partial_{i}$. Vector fields on $T^*M$ are characterized by their actions on functions $\iota X$. The complete lift $X^C$ of a vector 
field $X$ on $M$ to $T^*M$ is characterized by the identity
$$
X^C (\iota Z) = \iota [X,Z], \quad \mbox{for all}\quad Z\in \Gamma(TM). 
$$
Moreover, since a $(0,s)$-tensor field on $M$ is characterized by its evaluation on complete lifts of vector 
fields on $M$, for each tensor field $T$ of type $(1,1)$ on $M$, we define a $1$-form $\iota T$ on $T^*M$ which is 
characterized by the identity
$$
\iota T(X^C) = \iota (TX). 
$$
For more details on the geometry of cotangent bundle, we refer to the book of Yano and Ishihara \cite{yano}.

Let $\nabla$ be a torsion free affine connection on an $n$-dimensional affine manifold $M$. The \textit{Riemannian extension} $g_{\nabla}$ is the pseudo-Riemannian metric on $N$ of neutral signature
$(n,n)$ characterized by the identity \cite{broz,GarciaGilkeyNikcevicLorenzo}
$$
g_{\nabla}(X^C,Y^C) = -\iota (\nabla_X Y + \nabla_Y X). 
$$
In the locally induced coordinates $(u_i,u_{i'})$ on $\pi^{-1}(U)\subset T^* M$, the
Riemannian extension is expressed by
\begin{eqnarray}
 g_{\nabla}= 
\left(
      \begin{array}{cc}
      -2u_{k'}\Gamma^{k}_{ij}&\delta^{j}_{i}\\
      \delta^{j}_{i}&0
         \end{array}
\right),
\end{eqnarray}
with respect to $\{\partial_1,\cdots,\partial_n,\partial_{1'},\cdots,\partial_{n'}\} (i,j,k=1,\cdots,n;k'=k+n)$, where 
$\Gamma^{k}_{ij}$ are the Christoffel symbols of the torsion free affine connection $\nabla$ with respect to $(U,u_i)$ on $M$.
Some properties of the affine connection $\nabla$ can be investigated by means of the corresponding properties of the 
Riemannian extension $g_{\nabla}$. For instance, $(M,\nabla)$ is locally symmetric if and only if $(T^*M, g_{\nabla})$
is locally symmetric \cite{GarciaGilkeyNikcevicLorenzo}. Furthermore $(M,\nabla)$ is projectively flat if and only if $(T^*M, g_{\nabla})$ is locally 
conformally flat (see \cite{calvino2} for more details and references therein).

Let $\phi$ be a symmetric $(0,2)$-tensor field on $M$. The \textit{twisted Riemannian extension} is the neutral signature metric on $T^*M$ given by \cite{broz, GarciaGilkeyNikcevicLorenzo}
\begin{eqnarray}
 g_{(\nabla,\phi)}= 
\left(
      \begin{array}{cc}
      \phi_{ij}(u)-2u_{k'}\Gamma^{k}_{ij}&\delta^{j}_{i}\\
      \delta^{j}_{i}&0
         \end{array}
\right),
\end{eqnarray}
with respect to $\{\partial_1,\cdots,\partial_n,\partial_{1'},\cdots,\partial_{n'}\}, (i,j,k=1,\cdots,n;k'=k+n)$,
where $\Gamma^{k}_{ij}$ are the Christoffel symbols of the torsion free affine connection $\nabla$ with respect to $(U,u_i)$.

As an example of twisted Riemannian extension metrics, we have the Walker metrics. The latter is detailed as follows. We say that a neutral signature pseudo-Riemannian metric $g$ of a $2n$-dimensional manifold is a \textit{Walker metric} if, locally, we have
\begin{eqnarray*}
 g= 
\left(
      \begin{array}{cc}
      B&I_n\\
      I_n&0
         \end{array}
\right).
\end{eqnarray*}
Thus, in particular, if the coefficients of the matrix $B$ are polynomial functions of order at most $1$ in the $u_{i'}$ variables, then $g$ is locally a
twisted Riemannian extension; a twisted Riemannian extension is locally a Riemannian extension if $B$ vanishes on 
the zero-section. In these two instances, the linear terms in the $u_{i'}$ variables give the connection $1$-form of a 
torsion-free connection on the base manifold.\\

The non-zero Christoffel symbols $\widetilde{\Gamma}_{\alpha \beta}^{\gamma}$ of the Levi-Civita connection of the twisted Riemannian extension
$g_{(\nabla,\phi)}$ are given by:
\begin{eqnarray*}
 \widetilde{\Gamma}^{k}_{ij} &=& \Gamma^{k}_{ij}, \quad \widetilde{\Gamma}^{k'}_{i'j} = -\Gamma^{i}_{jk}\quad
 \widetilde{\Gamma}^{k'}_{ij'} = -\Gamma^{j}_{ik},\\
 \widetilde{\Gamma}^{k'}_{ij} &=& \sum_r u_{r'} (\partial_k \Gamma^{r}_{ij} - \partial_i \Gamma^{r}_{jk}
 -\partial_j \Gamma^{r}_{ik} + 2 \sum_l \Gamma^{r}_{kl}\Gamma^{l}_{ij} )\\
 &\quad & + \frac{1}{2}(\partial_i \phi_{jk}+\partial_j \phi_{ik}-\partial_k \phi_{ij})
 - \sum_l \phi_{kl}\Gamma^{l}_{ij},
 \end{eqnarray*}
where $(i,j,k,l,r=1,\cdots,n)$ and $(i'=i+n,j'=j+n,k'=k+n,r'=r+n)$.
The non-zero components of the curvature tensor of $(T^*M,g_{(\nabla,\phi)})$ up to the usual symmetries are
given as follows: we omit $\widetilde{R}_{kji}^{h'}$, as it plays no role in our considerations.
\begin{equation*}
 \widetilde{R}_{kji}^{h}= R_{kji}^{h}, \;\;  \widetilde{R}_{kji}^{h'},\;\;\widetilde{R}_{kji'}^{h'}=-R_{kjh}^{i},\;\; \widetilde{R}_{k'ji}^{h'} = R_{hij}^{k},
\end{equation*} 
where $R_{kji}^{h}$ are the components of the curvature tensor of $(M,\nabla)$.\\

Twisted Riemannian extensions have nilpotent Ricci operator and hence, they are Einstein if and only if they are Ricci flat \cite{GarciaGilkeyNikcevicLorenzo}. They can be used to construct non-flat Ricci flat pseudo-Riemannian manifolds.

The classical and twisted Riemannian extensions provide a link between the affine geometry of $(M,\nabla)$ and the neutral signature metric
on $T^*M$. Some properties of the affine connection $\nabla$ can be investigated by means of the corresponding properties of the classical and twisted Riemannian extensions. For more details and information about classical Riemannian extensions and twisted Riemannian extensions, see \cite{broz,calvino1,calvino2,GarciaGilkeyNikcevicLorenzo,gar} and references therein.

\section{Szab\'o metrics on the cotangent bundle}\label{Szabo}

In this section, we recall some basic definitions and results on affine Szab\'o manifolds \cite{dm}. Using the classical and twisted Riemannian extensions, we exhibit some examples of pseudo-Riemannian Szab\'o metrics of signature $(3,3)$, which are not locally symmetric.

\subsection{The affine Szab\'o manifolds} 

Let $(M,\nabla)$ be an $n$-dimensional smooth affine manifold, where $\nabla$ is a torsion-free affine connection on $M$. Let $\mathcal{R}^{\nabla}$
be the associated curvature operator of $\nabla$. We define the \textit{affine Szab\'o operator}
$\mathcal{S}^{\nabla}(X):T_p M\rightarrow T_p M$ with respect to a vector $X\in T_p M$ by
$$
 \mathcal{S}^{\nabla}(X) Y := (\nabla_X \mathcal{R}^{\nabla})(Y,X)X.
$$
\begin{definition}\cite{dm} {\rm
Let $(M,\nabla)$ be a smooth affine manifold.
\begin{enumerate}
\item  $(M,\nabla)$ is called \textit{affine Szab\'o at $p\in M$} if the affine Szab\'o  operator $S^{\nabla}(X)$ has the 
same characteristic polynomial for every vector field $X$ on $M$.
\item Also, $(M,\nabla)$ is called \textit{affine Szab\'o} if $(M,\nabla)$ is affine Szab\'o at each point $p\in M$.
\end{enumerate}
}
\end{definition}

\begin{theorem}\cite{dm} 
Let $(M,\nabla)$ be an $n$-dimensional affine manifold and $p\in M$. Then $(M,\nabla)$ is affine Szab\'o at $p\in M$ if and only if the characteristic polynomial of the affine Szab\'o operator $S^{\nabla}(X)$ is $ P_{\lambda}(\mathcal{S}^{\nabla}(X))=\lambda^{n}$, for every $X\in T_{p}M$.
 \end{theorem}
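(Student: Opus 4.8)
The plan is to prove the nontrivial implication by exploiting the fact that the Szabó operator is homogeneous of degree three in the vector $X$. One direction is immediate: if $P_{\lambda}(\mathcal{S}^{\nabla}(X))=\lambda^{n}$ for every $X\in T_{p}M$, then the characteristic polynomial is visibly the same for all $X$, so $(M,\nabla)$ is affine Szab\'o at $p$ by definition. All the content lies in the converse.

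First I would record the homogeneity. Since $\nabla\mathcal{R}^{\nabla}$ is a genuine tensor field, the quantity $(\nabla_{X}\mathcal{R}^{\nabla})(Y,X)X$ evaluated at $p$ depends only on $X_{p}$ and $Y_{p}$, so $\mathcal{S}^{\nabla}(X)$ is pointwise in $X$ and we may regard $X$ as a vector in $T_{p}M$. In the defining expression $\mathcal{S}^{\nabla}(X)Y=(\nabla_{X}\mathcal{R}^{\nabla})(Y,X)X$, the direction of differentiation contributes one factor of $X$, the slot in $\mathcal{R}^{\nabla}(Y,X)$ contributes a second, and the trailing factor contributes a third, each contraction being tensorial. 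Hence, for any constant scalar $t$,
$$
\mathcal{S}^{\nabla}(tX) = t^{3}\,\mathcal{S}^{\nabla}(X).
$$

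Next I would translate this into a statement about the characteristic polynomial. If $\lambda_{1},\dots,\lambda_{n}$ are the eigenvalues of $\mathcal{S}^{\nabla}(X)$ counted with multiplicity, then the eigenvalues of $\mathcal{S}^{\nabla}(tX)$ are $t^{3}\lambda_{1},\dots,t^{3}\lambda_{n}$, so writing
$$
P_{\lambda}(\mathcal{S}^{\nabla}(X)) = \lambda^{n}+c_{1}(X)\lambda^{n-1}+\cdots+c_{n}(X),
$$
each coefficient, being up to sign the $j$-th elementary symmetric function of the eigenvalues, satisfies $c_{j}(tX)=t^{3j}c_{j}(X)$. By the hypothesis that $(M,\nabla)$ is affine Szab\'o at $p$, the characteristic polynomials of $\mathcal{S}^{\nabla}(X)$ and of $\mathcal{S}^{\nabla}(tX)$ coincide, whence $c_{j}(X)=t^{3j}c_{j}(X)$ for every $t$ and every $j\ge 1$. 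Taking for instance $t=2$ gives $(8^{j}-1)\,c_{j}(X)=0$, so $c_{j}(X)=0$ for all $j\ge 1$, and therefore $P_{\lambda}(\mathcal{S}^{\nabla}(X))=\lambda^{n}$.

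I expect the only point requiring care to be the verification of the homogeneity degree, namely confirming that $\nabla_{X}\mathcal{R}^{\nabla}$ is tensorial (not differential) in $X$, so that substituting $X\mapsto tX$ with $t$ a genuine \emph{constant} really produces the factor $t^{3}$ rather than spurious derivative terms in $t$. Once this is settled, the remainder is formal symmetric-function bookkeeping. As a sanity check one may also note the degenerate shortcut $\mathcal{S}^{\nabla}(0)=0$, whose characteristic polynomial is $\lambda^{n}$; if the zero vector is admitted among the $X$ this alone forces the conclusion, but the homogeneity argument above is preferable since it applies uniformly to all nonzero $X$.
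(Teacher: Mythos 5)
Your proposal is correct: the cubic homogeneity $\mathcal{S}^{\nabla}(tX)=t^{3}\mathcal{S}^{\nabla}(X)$ (which the paper itself records in the pseudo-Riemannian setting as $\mathcal{S}(\alpha X)=\alpha^{3}\mathcal{S}(X)$) forces each characteristic coefficient $c_{j}$ to satisfy $c_{j}(X)=t^{3j}c_{j}(X)$, hence to vanish, and the other direction is immediate from the definition. The paper does not prove this theorem but quotes it from \cite{dm}, and the argument given there is essentially this same rescaling/homogeneity argument (the exact analogue of the classical affine Osserman case with $t^{2}$ replaced by $t^{3}$), so your attempt matches the intended proof.
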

This theorem leads to the following consequences which are proven in \cite{dm}.
\begin{corollary}\cite{dm} 
 $(M,\nabla)$ is affine Szab\'o if the affine Szab\'o operators are nilpotent, i.e., $0$ is the eigenvalue of $\mathcal{S}^{\nabla}(X)$ on the tangent bundle $T M$.   
\end{corollary}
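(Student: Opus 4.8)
The plan is to derive this statement as an immediate consequence of the preceding Theorem, using only a standard fact from linear algebra. The Theorem asserts that $(M,\nabla)$ is affine Szab\'o at $p$ if and only if the characteristic polynomial of $\mathcal{S}^{\nabla}(X)$ equals $\lambda^{n}$ for every $X\in T_pM$. Thus it suffices to show that the nilpotency hypothesis forces this characteristic polynomial to be $\lambda^{n}$ at every point.

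First I would recall the elementary equivalence: for an endomorphism $A$ of an $n$-dimensional vector space, the following are equivalent --- $A$ is nilpotent; the only eigenvalue of $A$ is $0$; the characteristic polynomial of $A$ is $P_{\lambda}(A)=\lambda^{n}$. The implication needed here is that if $A$ is nilpotent, say $A^{k}=0$, then any eigenvalue $\mu$ satisfies $\mu^{k}=0$, whence $\mu=0$; since the characteristic polynomial factors as the product of the $(\lambda-\mu)$ over all eigenvalues, it collapses to $\lambda^{n}$.

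Next I would apply this equivalence fiberwise. By hypothesis, for each $p\in M$ and each $X\in T_pM$ the Szab\'o operator $\mathcal{S}^{\nabla}(X)$ is nilpotent, so its characteristic polynomial is $\lambda^{n}$, independently of the choice of $X$. By the Theorem this is precisely the condition for $(M,\nabla)$ to be affine Szab\'o at $p$; since $p$ was arbitrary, $(M,\nabla)$ is affine Szab\'o at each point, and hence affine Szab\'o by definition.

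I do not expect a genuine obstacle, as the content reduces entirely to the cited linear-algebra equivalence together with the already-established Theorem. The only point meriting a word of care is that the eigenvalues should be read over $\mathbb{C}$ when $M$ is a real manifold, so that the phrase ``$0$ is the sole eigenvalue'' coincides with ``characteristic polynomial equal to $\lambda^{n}$''; the characteristic polynomial itself has real coefficients, so the conclusion $P_{\lambda}(\mathcal{S}^{\nabla}(X))=\lambda^{n}$ remains a statement over $\mathbb{R}$, matching the hypothesis of the Theorem exactly.
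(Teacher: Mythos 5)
Your proposal is correct and follows exactly the route the paper intends: the corollary is presented as an immediate consequence of the preceding theorem (the paper defers the details to \cite{dm}), and deducing it by noting that fiberwise nilpotency forces $P_{\lambda}(\mathcal{S}^{\nabla}(X))=\lambda^{n}$ for every $X$ and every $p$ is precisely that argument. Your remark about reading eigenvalues over $\mathbb{C}$ while the characteristic polynomial stays real is a sensible precaution and does not change the substance.
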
 
\begin{corollary}\cite{dm}  
 If $(M,\nabla)$ is affine Szab\'o at $p\in M$, then the Ricci tensor is cyclic parallel. 
\end{corollary}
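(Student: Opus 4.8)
The plan is to reduce the corollary to a one–step trace computation followed by a polarization. By the preceding theorem, $(M,\nabla)$ being affine Szab\'o at $p$ is equivalent to the characteristic polynomial of $\mathcal{S}^{\nabla}(X)$ being $\lambda^{n}$ for every $X\in T_pM$. In particular all the non-leading coefficients vanish; the cheapest one to exploit is the coefficient of $\lambda^{n-1}$, which up to sign is $\operatorname{tr}\mathcal{S}^{\nabla}(X)$. Hence the first consequence I would extract from the hypothesis is
$$\operatorname{tr}\mathcal{S}^{\nabla}(X)=0\qquad\text{for every }X\in T_pM.$$

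Next I would compute this trace explicitly in terms of the Ricci tensor $\rho$ of $\nabla$. Writing $\mathcal{S}^{\nabla}(X)Y=(\nabla_X\mathcal{R}^{\nabla})(Y,X)X$ in a local frame $\{e_i\}$ and tracing the endomorphism $Y\mapsto(\nabla_X\mathcal{R}^{\nabla})(Y,X)X$, the contraction of the output index against the first curvature slot is exactly the Ricci contraction, and since covariant differentiation commutes with contraction the inner $\nabla_X$ passes outside. I therefore expect
$$\operatorname{tr}\mathcal{S}^{\nabla}(X)=(\nabla_X\rho)(X,X).$$
This identity is the technical heart of the argument; the only real care needed is bookkeeping of \emph{which} index of $\mathcal{R}^{\nabla}$ is contracted, so that the resulting trace is genuinely the Ricci contraction and not some other one.

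With this identity the hypothesis becomes the identical vanishing of the cubic form $X\mapsto(\nabla_X\rho)(X,X)$. I would then polarize: setting $T(X,Y,Z):=(\nabla_X\rho)(Y,Z)$, the condition $T(X,X,X)=0$ for all $X$ is equivalent, by the standard polarization of a cubic form, to the vanishing of the total symmetrization
$$\sum_{\sigma} T\bigl(\sigma(X),\sigma(Y),\sigma(Z)\bigr)=0,$$
the sum running over all permutations of $X,Y,Z$. Using the symmetry of $\rho$ in its two arguments (equivalently, noting that the double occurrence of $X$ only sees the symmetric part of $\rho$), the six terms collapse pairwise to
$$(\nabla_X\rho)(Y,Z)+(\nabla_Y\rho)(Z,X)+(\nabla_Z\rho)(X,Y)=0,$$
which is precisely the statement that $\rho$ is cyclic parallel.

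The main obstacle I anticipate is the trace computation: producing the clean identification $\operatorname{tr}\mathcal{S}^{\nabla}(X)=(\nabla_X\rho)(X,X)$ with the correct contraction and sign, and observing that because $X$ appears twice the cubic form constrains only the symmetric part of $\rho$ — so the cyclic-parallel conclusion is to be read for the (symmetric) Ricci tensor. Once that identity is in place, the passage from $(\nabla_X\rho)(X,X)\equiv 0$ to the cyclic identity is routine multilinear algebra.
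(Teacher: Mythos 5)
Your argument is correct, but there is nothing in the paper to compare it against: the corollary is imported from \cite{dm} with no proof reproduced here. Judged on its own, your route is the standard (and surely the intended) one. The chain is exactly right: affine Szab\'o at $p$ forces $P_{\lambda}(\mathcal{S}^{\nabla}(X))=\lambda^{n}$, hence $\operatorname{tr}\mathcal{S}^{\nabla}(X)=0$ for all $X$; with the convention $\rho(V,W)=\operatorname{tr}\bigl(U\mapsto R(U,V)W\bigr)$, contracting the output slot of $Y\mapsto(\nabla_X\mathcal{R}^{\nabla})(Y,X)X$ against $Y$ gives $(\nabla_X\rho)(X,X)$, since covariant differentiation commutes with contraction (the opposite Ricci convention only flips the sign, which is immaterial for a vanishing statement); and polarization of the cubic form $X\mapsto(\nabla_X\rho)(X,X)$, using symmetry of $T(X,Y,Z)=(\nabla_X\rho)(Y,Z)$ in its last two slots, collapses the six-term symmetrization to twice the cyclic sum, which is the $L_3$ condition.

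One point you hedge should instead be closed. For a general torsion-free affine connection the Ricci tensor need not be symmetric, and, as you observe, the cubic form only constrains the symmetric part $\rho^{s}$, so your polarization yields the cyclic identity for $\rho^{s}$ alone. But the antisymmetric part costs nothing extra: by the first Bianchi identity $\rho^{a}$ is, up to a constant factor, the $2$-form $\operatorname{tr}R(\cdot,\cdot)$; tracing the second Bianchi identity shows this $2$-form is closed; and for a torsion-free connection the cyclic sum $(\nabla_X\omega)(Y,Z)+(\nabla_Y\omega)(Z,X)+(\nabla_Z\omega)(X,Y)$ of any $2$-form $\omega$ equals $d\omega(X,Y,Z)$. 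Hence the cyclic sum of $\nabla\rho^{a}$ vanishes identically, and combined with your conclusion for $\rho^{s}$ this gives cyclic parallelism of the full Ricci tensor, so the corollary holds without any symmetry hypothesis on $\rho$.
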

Affine Szab\'o connections are well-understood in $2$-dimension, due to the fact that an affine connection is Szab\'o if and only if its Ricci tensor is cyclic parallel \cite{dm}. The situation is however more complicated in higher dimensions where the cyclic parallelism is a necessary but not sufficient condition for 
an affine connection to be Szab\'o.

According to Kowalski and Sekizawa \cite{KowalskiSekizawa}, an affine manifold $(M,\nabla)$ is said to be an $L_3$-space if its Ricci tensor is cyclic parallel. Then, we have:
\begin{theorem}
 Let $(M,\nabla)$ be a two-dimensional smooth torsion free affine manifold. Then, the following statements are equivalent:
 \begin{enumerate}
  \item $(M,\nabla)$ is an affine Szab\'o manifold.
  \item $(M,\nabla)$ is a $L_3$-space.
 \end{enumerate}
\end{theorem}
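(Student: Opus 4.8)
The plan is to prove the two implications separately, noting that one of them is essentially already available. For $(1)\Rightarrow(2)$ I would simply invoke the corollary above asserting that an affine Szab\'o manifold has cyclic parallel Ricci tensor; by the Kowalski--Sekizawa terminology recalled just before the statement, this says precisely that $(M,\nabla)$ is an $L_3$-space. So the genuine content is the reverse implication $(2)\Rightarrow(1)$, and the whole argument rests on the low-dimensional feature that in dimension two the bundle $\Lambda^2T_pM$ of $2$-forms is one-dimensional.

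For $(2)\Rightarrow(1)$ I would first fix a local area form $\omega$ and exploit that the curvature operator $\mathcal{R}^{\nabla}(Y,Z)$, being alternating in $(Y,Z)$ with values in endomorphisms, must factor as $\mathcal{R}^{\nabla}(Y,Z)=\omega(Y,Z)\,A$ for a single endomorphism field $A$. Because covariant differentiation commutes with this alternation and the space of $2$-forms stays one-dimensional, the derivative factors in the same way, $(\nabla_X\mathcal{R}^{\nabla})(Y,Z)=\omega(Y,Z)\,C(X)$, where $C(X)$ depends linearly on $X$ (here $C(X)$ absorbs both $\nabla_X A$ and the term coming from $\nabla_X\omega$, so no parallelism of $\omega$ is needed). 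Substituting $Z=X$ into the definition of the Szab\'o operator then gives
\[
\mathcal{S}^{\nabla}(X)Y=(\nabla_X\mathcal{R}^{\nabla})(Y,X)X=\omega(Y,X)\,\bigl(C(X)X\bigr).
\]

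This is the key observation: for each $X$ the operator $Y\mapsto\omega(Y,X)\,(C(X)X)$ has rank at most one, since its image is spanned by the single vector $C(X)X$ and $X$ lies in its kernel. Hence $\det\mathcal{S}^{\nabla}(X)=0$ identically, and in dimension two its characteristic polynomial automatically collapses to $\lambda^{2}-\mathrm{tr}\bigl(\mathcal{S}^{\nabla}(X)\bigr)\lambda$. By the characterization theorem stated above, $(M,\nabla)$ is affine Szab\'o at $p$ as soon as this polynomial equals $\lambda^{2}$, so it suffices to show that the trace vanishes for every $X$. Computing the trace of the rank-one map yields $\mathrm{tr}\,\mathcal{S}^{\nabla}(X)=\omega(C(X)X,X)$, and applying the same factorization to the Ricci tensor (written as the curvature trace $\rho(Y,Z)=\mathrm{tr}\{W\mapsto\mathcal{R}^{\nabla}(W,Y)Z\}$, whose covariant derivative commutes with the contraction) identifies this with $(\nabla_X\rho)(X,X)$, where $\rho$ is the Ricci tensor of $\nabla$.

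Finally I would feed in the cyclic parallel hypothesis. The condition $(\nabla_X\rho)(X,X)=0$ for all $X$ is, by polarization, exactly the vanishing of the complete symmetrization $\mathfrak{S}_{X,Y,Z}(\nabla_X\rho)(Y,Z)=0$, i.e. the $L_3$/cyclic-parallel condition; therefore the trace vanishes, the characteristic polynomial is $\lambda^{2}$, and $(M,\nabla)$ is affine Szab\'o. The main obstacle I anticipate is the bookkeeping in the trace computation: one must pin down the Ricci-tensor sign convention and verify that contracting the rank-one Szab\'o operator reproduces $(\nabla_X\rho)(X,X)$ exactly. A secondary subtlety is that for a general torsion-free affine connection $\rho$ need not be symmetric, so one should observe that $(\nabla_X\rho)(X,X)$ only detects the symmetric part of $\nabla\rho$ and confirm that this is precisely the quantity governed by the cyclic-parallel condition.
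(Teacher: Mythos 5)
Your argument is correct, and it is worth noting at the outset that the paper itself contains no proof of this theorem: it is imported from \cite{dm}, the surrounding text merely remarking that in dimension two an affine connection is Szab\'o if and only if its Ricci tensor is cyclic parallel. The nearest in-paper relative is the proof of Proposition \ref{p1}, which treats a three-dimensional family by brute force, writing out the Ricci tensor, the cyclic-parallelism equations, and the matrix of $\mathcal{S}^{\nabla}(X)$ in coordinates. Your route is genuinely different and more conceptual. The factorization $\mathcal{R}^{\nabla}(Y,Z)=\omega(Y,Z)A$ through a local area form, which survives covariant differentiation as $(\nabla_X\mathcal{R}^{\nabla})(Y,Z)=\omega(Y,Z)C(X)$ (correctly, with no parallelism of $\omega$ required, since $\nabla_X\omega$ is again a multiple of $\omega$), exhibits $\mathcal{S}^{\nabla}(X)$ as an operator of rank at most one; hence $\det\mathcal{S}^{\nabla}(X)=0$ comes for free and everything reduces to the trace identity $\operatorname{tr}\mathcal{S}^{\nabla}(X)=\omega(C(X)X,X)=(\nabla_X\rho)(X,X)$, which checks out with the convention $\rho(Y,Z)=\operatorname{tr}\{W\mapsto\mathcal{R}^{\nabla}(W,Y)Z\}$ (the other convention only flips a sign, irrelevant for vanishing). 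Combined with the quoted characterization (Szab\'o at $p$ iff $P_{\lambda}=\lambda^{n}$), this makes transparent exactly why dimension two is special, namely $\dim\Lambda^{2}T_pM=1$, where a coordinate computation in the style of Proposition \ref{p1} would verify the same facts without explaining them.

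One refinement to your last paragraph, where you flagged the right subtlety but left it open: for non-symmetric $\rho$, the vanishing of $(\nabla_X\rho)(X,X)$ for all $X$ is, by polarization, the vanishing of the full symmetrization of $\nabla\rho$, and this is \emph{not} literally the cyclic-sum condition in general dimension, because the cyclic sum also sees the antisymmetric part $\rho^{a}$ via the torsion-free identity
\[
(\nabla_X\rho^{a})(Y,Z)+(\nabla_Y\rho^{a})(Z,X)+(\nabla_Z\rho^{a})(X,Y)=d\rho^{a}(X,Y,Z).
\]
Two observations close this. For the implication you actually need, $L_3\Rightarrow$ Szab\'o, simply set $Y=Z=X$ in the cyclic sum: the antisymmetric part contributes nothing and you get $3(\nabla_X\rho)(X,X)=0$ directly, with no polarization needed. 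And in dimension two $d\rho^{a}$ is a $3$-form on a surface, hence identically zero, so the full equivalence you asserted does hold verbatim there. With that observation supplied, your proof is complete and, modulo the citation of the paper's corollary for the direction Szab\'o $\Rightarrow$ cyclic parallel, self-contained.
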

In higher dimensions, it is not hard to see that there exist $L_3$-spaces which are not affine Szab\'o manifolds.

Next, we have an example of a real smooth manifold of three dimensional in which the equivalence between Szab\'o and $L_{3}$ conditions holds. Let $M$ be a $3$-dimensional smooth manifold and $\nabla$ a torsion-free connection. We choose a fixed coordinates neighborhood $\mathcal{U}(u_1,u_2,u_3) \subset M$. 
\begin{proposition}\label{p1}
Let $M$ be a $3$-dimensional manifold with torsion free connection given by 
\begin{equation}\label{e1} 
\nabla_{\partial_1} \partial_1  =  f_1  \partial_2,\;\;
\nabla_{\partial_2} \partial_2  =  f_2  \partial_2,\;\;
\nabla_{\partial_3} \partial_3  =  f_3  \partial_2.  
\end{equation}
where $f_{i}= f_{i}(u_1,u_2,u_3)$, for $i=1,2,3$. Then $(M,\nabla)$ is affine Szab\'o if and only if the Ricci tensor of the affine connection (\ref{e1}) is cyclic parallel.
\end{proposition}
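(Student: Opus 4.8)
The plan is to exploit the extreme rigidity of this connection: its only non-vanishing Christoffel symbols are $\Gamma^{2}_{11}=f_{1}$, $\Gamma^{2}_{22}=f_{2}$ and $\Gamma^{2}_{33}=f_{3}$. First I would observe that every $\nabla_{\partial_i}\partial_j$ is a multiple of $\partial_2$ and that $\nabla_{\partial_i}\partial_2\in\mathbb{R}\,\partial_2$ for each $i$; hence the line field $\mathbb{R}\,\partial_2$ is $\nabla$-parallel. Because the curvature is built from iterated covariant derivatives, $R(\,\cdot\,,\cdot\,)\,\cdot$, and then each term of $(\nabla_X R)(\,\cdot\,,\cdot\,)\,\cdot$, take values in $\mathbb{R}\,\partial_2$. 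In particular, for every $X$ the affine Szab\'o operator $\mathcal{S}^{\nabla}(X)Y=(\nabla_X R)(Y,X)X$ has image contained in $\mathbb{R}\,\partial_2$, so it is an endomorphism of $T_pM$ of rank at most one. Computing the six nonzero brackets $R(\partial_i,\partial_j)\partial_k$ explicitly (all proportional to $\partial_2$) is the only family-specific calculation this step requires.

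A rank-$\le 1$ endomorphism $A$ of a three-dimensional space has characteristic polynomial $\lambda^{2}\big(\lambda-\operatorname{tr}A\big)$; so, by the characterization $P_{\lambda}(\mathcal{S}^{\nabla}(X))=\lambda^{3}$ recalled above, $(M,\nabla)$ is affine Szab\'o if and only if $\operatorname{tr}\mathcal{S}^{\nabla}(X)=0$ for all $X$. I would then invoke the general identity, valid on any affine manifold since contraction commutes with $\nabla$,
\begin{equation*}
\operatorname{tr}\mathcal{S}^{\nabla}(X)=(\nabla_{X}\rho)(X,X),
\end{equation*}
where $\rho(Y,Z)=\operatorname{tr}\{W\mapsto R(W,Y)Z\}$ is the Ricci tensor; it is obtained simply by tracing $(\nabla_X R)(\,\cdot\,,X)X$ over the first slot. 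Thus affine Szab\'o is equivalent to $(\nabla_{X}\rho)(X,X)=0$ for every $X$.

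It remains to identify this cubic condition with cyclic parallelism, $\mathfrak{S}_{X,Y,Z}(\nabla_X\rho)(Y,Z)=0$ (cyclic sum). Contracting the cyclic identity with $X\otimes X\otimes X$ yields $(\nabla_X\rho)(X,X)=0$, which settles one implication. For the converse I would split $\rho=\rho^{s}+\rho^{a}$ into its symmetric and skew parts; since $(\nabla_X\rho)(X,X)$ only sees $\rho^{s}$, its vanishing for all $X$ is, by polarization, exactly $\mathfrak{S}(\nabla\rho^{s})=0$. The cyclic sum $\mathfrak{S}(\nabla\rho^{s})$ is totally symmetric, whereas $\mathfrak{S}(\nabla\rho^{a})$ equals the $3$-form $d\rho^{a}$ and is totally skew; lying in complementary symmetry types, they can cancel only by both vanishing, so cyclic parallelism of $\rho$ amounts to $\mathfrak{S}(\nabla\rho^{s})=0$ together with $d\rho^{a}=0$. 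For any torsion-free connection $\rho^{a}=-\tfrac12\,d\theta$ with $\theta_{k}=\Gamma^{i}_{ik}$, hence $d\rho^{a}=0$ automatically; here $\theta=f_{2}\,du^{2}$, so this is immediate. Combining the three steps gives the stated equivalence.

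The genuinely computational part is confined to the first step---checking that $R$ and $\nabla R$ are valued in $\mathbb{R}\,\partial_2$---after which nilpotency collapses to the single scalar $(\nabla_{X}\rho)(X,X)$ and the rest is formal. I expect the only real pitfall to be the skew part of Ricci: one must verify $d\rho^{a}=0$ so that the $L_3$ condition (cyclic parallelism of the \emph{full} Ricci tensor) is not silently replaced by cyclic parallelism of its symmetric part alone. The identity $\rho^{a}=-\tfrac12\,d\theta$ for torsion-free connections is precisely what keeps the two notions equal.
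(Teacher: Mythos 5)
Your proof is correct, and it takes a genuinely different route from the paper's. The paper argues by direct computation: it writes out the Ricci tensor of \eqref{e1} in coordinates, lists the nine PDEs expressing cyclic parallelism, and then computes the matrix of $\mathcal{S}^{\nabla}(X)$ for $X=\sum_i\alpha_i\partial_i$, finding the form \eqref{e3} with only the $(2,1)$ and $(2,3)$ entries nonzero, hence characteristic polynomial $\lambda^3$; the necessity direction is simply quoted from \cite{dm} (affine Szab\'o implies cyclic parallel Ricci). You instead isolate the structural mechanism: the parallel line field $\mathbb{R}\,\partial_2$ forces $\mathcal{S}^{\nabla}(X)$ to have rank at most one, so the entire characteristic polynomial collapses to the single coefficient $\operatorname{tr}\mathcal{S}^{\nabla}(X)=(\nabla_X\rho)(X,X)$, and the equivalence with cyclic parallelism becomes a polarization/symmetry-type argument, with the skew part of the Ricci tensor disposed of via $\rho^{a}=-\tfrac12\,d\theta$, $\theta=f_2\,du^2$. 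Your route buys three things: it proves both implications internally, without invoking the corollary from \cite{dm}; it handles a point the paper passes over silently, namely that the Ricci tensor of \eqref{e1} is genuinely non-symmetric (e.g. $\rho_{12}=-\partial_1 f_2$ while $\rho_{21}=0$), so cyclic parallelism of the full Ricci tensor versus its symmetric part is a real issue, settled by exactness of $\rho^a$; and it clarifies the logic of the paper's matrix \eqref{e3}, whose vanishing trace cannot hold for arbitrary $f_i$ (for $f_1=f_3=0$, $f_2=u_1^2$ your identity gives $(\nabla_X\rho)(X,X)=-2\alpha_1^2\alpha_2+2u_1^3\alpha_1\alpha_2^2\neq0$), and which must therefore be read as already simplified using the cyclic-parallel PDEs---your trace identity makes explicit that the $(2,2)$-entry is precisely the cyclic-parallelism obstruction. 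What the paper's computation buys in exchange is the explicit PDE system on $f_1,f_2,f_3$, which is what produces concrete examples such as Example \ref{Exampl1}.
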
 
\begin{proof}
 We denote the functions $f_1 (u_1,u_2,u_3), f_2 (u_1,u_2,u_3)$ and $f_1 (u_1,u_2,u_3)$ by $f_1,f_2$ and $f_3$
 respectively, if there is no risk of confusion. The Ricci tensor of the affine connection (\ref{e1}) expressed
 in the coordinates $(u_1,u_2,u_3)$ takes the form
 \begin{align}\label{e2} 
Ric^{\nabla} (\partial_1,\partial_1) &=  \partial_2 f_1 + f_1f_2,\quad
Ric^{\nabla} (\partial_1,\partial_2) = -\partial_1 f_2,\\
Ric^{\nabla} (\partial_3,\partial_2) &=  -\partial_3 f_2,\quad
Ric^{\nabla} (\partial_3,\partial_3) = \partial_2 f_3 + f_2f_3. 
\end{align}
It is know that the Ricci tensor of any affine Szab\'o is cyclic parallel \cite{dm}, it follows from the expressions in  
(\ref{e2}) that we have the following necessary condition for the affine connection (\ref{e1}) to be Szab\'o
\begin{align*}
 &\partial_1\partial_3 f_2  = 0,\\
 &\partial_1\partial_2 f_2 -f_2\partial_1 f_2  =0,\\
 &\partial_3\partial_2 f_2 -f_2\partial_3 f_2 =0,\\
 & \partial_1\partial_2 f_1 +2f_1\partial_1f_2 +f_2\partial_1f_1 =0,\\
 &\partial_2\partial_3 f_3 +2f_3\partial_3f_2 +f_2\partial_3f_3  =0,\\
 &\partial_2\partial_3 f_1 +2f_1\partial_3f_2 +f_2\partial_3f_1 =0,\\
 & \partial_2\partial_1 f_3 +2f_3\partial_1f_2 +f_2\partial_1f_3 =0,\\
& \partial^{2}_{2}f_1 +f_1\partial_2f_2 +f_2\partial_2f_1 -\partial^{2}_{1}f_2 =0,\\
& \partial^{2}_{2}f_3 +f_3\partial_2f_2 +f_2\partial_2f_3 -\partial^{2}_{3}f_2 =0.
\end{align*}
Now, for each vector $X=\sum_{i=1}^{3} \alpha_i \partial_i$, a straightforward calculation shows that the associated affine
Szab\'o operator is given by
\begin{eqnarray}\label{e3}
(\mathcal{S}^{\nabla} (X)) = 
\left(
      \begin{array}{lll}
      0&0&0\\
      a&0&c\\
       0&0&0\\
         \end{array}
\right),
\end{eqnarray}
with $a$ and $c$ are partial differential equations of $f_1,f_2$ and $f_3$. It follows from the matrix associated to 
$\mathcal{S}^{\nabla} (X)$, that its characteristic polynomial as written as follows:
$
 P_{\lambda} [\mathcal{R}^{\nabla} (X)] = \lambda^3.
$
It follows that a affine connection given by (\ref{e1}) is affine Szab\'o if its Ricci tensor is cyclic parallel.
\end{proof}

\begin{example}\label{Exampl1}{\rm
The following connection on $\mathbb{R}^3$ defined by
\begin{eqnarray}\label{e4}
\nabla_{\partial_1} \partial_1 = u_1 u_3 \partial_2, \quad
\nabla_{\partial_2} \partial_2 = 0, \quad 
\nabla_{\partial_3} \partial_3 = (u_1 + u_3) \partial_2
\end{eqnarray}
is a non-flat affine Szab\'o connection.}
\end{example}

\subsection{Szab\'o pseudo-Riemannian manifolds}

Let $(M,g)$ be a pseudo Riemannian manifold. The Szab\'o operator $$\mathcal{S}(X):Y\mapsto (\nabla_X R)(Y,X)X$$ is a symmetric operator with $\mathcal{S}(X)X=0$. It plays an important role in the study of totally isotropic manifols. Since $\mathcal{S}(\alpha X)=\alpha^3 \mathcal{S}(X)$, the natural domains of definition
for the Szab\'o operator are the pseudo-sphere bundles
$$
S^{\pm}(M,g) =\{X\in TM, g(X,X)=\pm 1\}.
$$
One says that $(M,g)$ is Szab\'o if the eigenvalues of $\mathcal{S}(X)$ are constant on the pseudo-spheres of unit timelike and spacelike vectors. The eigenvalue zero plays a distinguished role. One says that $(M,g)$ is nilpotent Szab\'o if $Spec(\mathcal{S}(X))=\{0\}$ for all $X$. If $(M,g)$ is nilpotent Szab\'o of order $1$, then $(M,g)$ is a local symmetric space (see \cite{fiedler} for more details).

Szab\'o in  \cite{Szabo} used techniques from algebraic topology to show, in the Riemannian setting, that any such a metric 
is locally symmetric. He used this observation to  prove that any two point homogeneous space 
is either flat or is a rank one symmetric space. Subsequently Gilkey and Stravrov \cite{GilkeyStravrov} 
extended this result to show that any Szab\'o Lorentzian manifold has constant sectional curvature.
However, for metrics of higher signature the situation is different. Indeed it was showed in \cite{GilkeyIvanovaZhang} 
the existence of Szab\'o pseudo-Riemannian manifolds endowed with metrics of signature $(p,q)$ 
with $p\geq 2$ and $q\geq 2$ which are not locally symmetric .

Next, we use the classical and twisted Riemannian extensions to construct some pseudo-Riemannian metrics on $\mathbb{R}^6$ which are nilpotent Szab\'o of order $\ge 2$.

\subsection{Riemannian extensions of an affine Szab\'o connection.}

We start with the following result.

\begin{theorem}
 Let $(M,\nabla)$ be a smooth torsion-free affine manifold. Then the following statements are equivalent:
 \begin{enumerate}
  \item[(i)] $(M,\nabla)$ is affine Szab\'o.
  \item[(ii)]  The Riemannian extension $(T^*M,g_{\nabla})$ of $(M,\nabla)$ is a pseudo Riemannian Szab\'o manifold.
 \end{enumerate}
\end{theorem}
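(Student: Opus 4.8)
The plan is to compute the pseudo-Riemannian Szab\'o operator $\mathcal{S}(\widetilde{X})$ of $(T^*M,g_{\nabla})$ directly in the induced coordinate frame $\{\partial_1,\dots,\partial_n,\partial_{1'},\dots,\partial_{n'}\}$, using the Levi-Civita Christoffel symbols $\widetilde{\Gamma}$ and the curvature components $\widetilde{R}$ recorded in Section \ref{prel} with $\phi\equiv 0$, and then to compare its characteristic polynomial with that of the affine Szab\'o operator $\mathcal{S}^{\nabla}$ of the base. The bridge between the two notions is the characterisation recalled above: $(M,\nabla)$ is affine Szab\'o if and only if $P_{\lambda}(\mathcal{S}^{\nabla}(X))=\lambda^{n}$ for every $X$, i.e. every $\mathcal{S}^{\nabla}(X)$ is nilpotent.

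First I would fix $(p,\omega)\in T^*M$ and decompose an arbitrary tangent vector as $\widetilde{X}=\sum_i a^i\partial_i+\sum_i b^i\partial_{i'}$, so that $X:=\pi_*\widetilde{X}=\sum_i a^i\partial_i$. Since $\widetilde{\Gamma}^{k}_{ij}=\Gamma^{k}_{ij}$ and $\widetilde{R}^{h}_{kji}=R^{h}_{kji}$, the purely horizontal part of $\widetilde{\nabla}\widetilde{R}$ should reproduce $(\nabla_X\mathcal{R}^{\nabla})(\,\cdot\,,X)X$; using in addition the dual component $\widetilde{R}^{h'}_{kji'}=-R^{i}_{kjh}$, I expect $\mathcal{S}(\widetilde{X})$ to be block triangular with respect to the horizontal/vertical splitting,
\begin{equation*}
\mathcal{S}(\widetilde{X})=
\begin{pmatrix}
\mathcal{S}^{\nabla}(X) & 0\\
\ast & -\mathcal{S}^{\nabla}(X)^{\top}
\end{pmatrix},
\end{equation*}
the horizontal-horizontal block being exactly the affine Szab\'o operator of the projected vector and the vertical-vertical block its negative transpose, while the off-diagonal block $\ast$ collects the terms carrying the fibre coordinates $u_{k'}$ and is irrelevant for the spectrum. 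Consequently
\begin{equation*}
P_{\lambda}\big(\mathcal{S}(\widetilde{X})\big)=P_{\lambda}\big(\mathcal{S}^{\nabla}(X)\big)\cdot P_{\lambda}\big(-\mathcal{S}^{\nabla}(X)^{\top}\big),
\end{equation*}
so $P_{\lambda}(\mathcal{S}^{\nabla}(X))$ divides $P_{\lambda}(\mathcal{S}(\widetilde{X}))$, and (being a characteristic polynomial of a block-triangular matrix) all coefficients of $P_{\lambda}(\mathcal{S}(\widetilde{X}))$ depend on $\widetilde{X}$ only through $X$.

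For (i)$\Rightarrow$(ii): if $(M,\nabla)$ is affine Szab\'o then $P_{\lambda}(\mathcal{S}^{\nabla}(X))=\lambda^{n}$ for all $X$, whence $P_{\lambda}(\mathcal{S}(\widetilde{X}))=\lambda^{2n}$ for every $\widetilde{X}$; thus $\mathrm{Spec}(\mathcal{S}(\widetilde{X}))=\{0\}$ and $(T^*M,g_{\nabla})$ is nilpotent Szab\'o, hence Szab\'o. For (ii)$\Rightarrow$(i): each non-leading coefficient $c_k(\widetilde{X})$ of $P_{\lambda}(\mathcal{S}(\widetilde{X}))$ is, by the homogeneity $\mathcal{S}(\alpha\widetilde{X})=\alpha^{3}\mathcal{S}(\widetilde{X})$, a homogeneous polynomial of positive degree that depends on $\widetilde{X}$ only through $X$. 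Now every nonzero $X\in T_pM$ completes to a spacelike unit vector $\widetilde{X}$: one solves $2\sum_i a^i b^i=1+2u_{k'}\Gamma^{k}_{ij}a^ia^j$ for $b$, which is possible precisely because $X\neq 0$. The Szab\'o hypothesis makes each $c_k$ constant on the unit pseudo-sphere, hence constant on all of $T_pM\setminus\{0\}$; a homogeneous polynomial of positive degree that is constant off the origin must vanish, so $P_{\lambda}(\mathcal{S}(\widetilde{X}))=\lambda^{2n}$ and therefore $P_{\lambda}(\mathcal{S}^{\nabla}(X))=\lambda^{n}$ for every $X$, i.e. $(M,\nabla)$ is affine Szab\'o.

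I expect the main obstacle to be the explicit derivation of $\widetilde{\nabla}\widetilde{R}$ and the verification of the block-triangular form with the precise identification of the diagonal blocks; this is a long but mechanical computation with the $\widetilde{\Gamma}$ and $\widetilde{R}$ of Section \ref{prel}. It is worth noting that only the block-triangularity together with the identification of \emph{one} diagonal block as $\mathcal{S}^{\nabla}(X)$ is genuinely needed: the exact sign and shape of the second diagonal block and of the off-diagonal block $\ast$ are immaterial for the spectral argument, since the characteristic polynomial ignores $\ast$ and any monic degree-$n$ factor of $\lambda^{2n}$ must equal $\lambda^{n}$.
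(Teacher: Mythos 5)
Your proposal is correct and takes essentially the same route as the paper: the same block-triangular form of the Szab\'o operator on $T^*M$ with diagonal blocks $\mathcal{S}^{\nabla}(X)$ and (up to a sign, which as you rightly note is immaterial for the spectrum; the paper records $+{}^t\mathcal{S}^{\nabla}(X)$) its transpose, the resulting factorization of characteristic polynomials, and the lift of an arbitrary nonzero base vector to a unit vector on $T^*M$ for the converse. Your homogeneity argument forcing $P_{\lambda}(\mathcal{S}(\widetilde{X}))=\lambda^{2n}$ is just a slightly more explicit version of the paper's appeal to constancy of the characteristic polynomial, the two being equivalent by the paper's characterization of affine Szab\'o via $P_{\lambda}(\mathcal{S}^{\nabla}(X))=\lambda^{n}$.
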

\begin{proof}
 Let $\tilde{X}=\alpha_i \partial_i + \alpha_{i'}\partial_{i'}$ be a vector field on $T^* M$. 
 Then the matrix of the Szab\'o operator $\tilde{S}(\tilde{X})$ with respect to the basis $\{\partial_i,\partial_{i'}\}$ is of the form
 \begin{eqnarray} 
 \tilde{\mathcal{S}}(\tilde{X}) = \left(\begin{array}{cc}
                                         \mathcal{S}^{\nabla}(X)&0\\
                                         *& {}^t\mathcal{S}^{\nabla}(X)
                                        \end{array}
\right).\nonumber
\end{eqnarray}
where $\mathcal{S}^{\nabla}(X)$ is the matrix of the affine Szab\'o operator on $M$ relative
to the basis $\{\partial_i\}$. Note that the characteristic polynomial $P_{\lambda}[\tilde{\mathcal{S}}(\tilde{X})]$ of $\tilde{\mathcal{S}}(\tilde{X})$
and $P_{\lambda}[\mathcal{S}^{\nabla}(X)]$ of $\mathcal{S}^{\nabla}(X)$ are related by 
\begin{eqnarray}\label{SzaboMatrix} 
P_{\lambda}[\tilde{\mathcal{S}}(\tilde{X})]=P_{\lambda}[\mathcal{S}^{\nabla}(X)]\cdot P_{\lambda}[{}^t\mathcal{S}^{\nabla}(X)]. 
\end{eqnarray}
Now, if the affine manifold $(M,\nabla)$ is assumed to be affine Szab\'o, then $\mathcal{S}^{\nabla}(X)$ has zero eigenvalues for each
vector field $X$ on $M$. Therefore, it follows from (\ref{SzaboMatrix}) that the eigenvalues of $\tilde{\mathcal{S}}(\tilde{X})$ vanish
for every vector field $\tilde{X}$ on $T^* M$. Thus $(T^*M, g_{\nabla})$ is pseudo-Riemannian Szab\'o manifold.\\
Conversely, assume that $(T^*M, g_{\nabla})$ is an pseudo-Riemannian Szab\'o manifold. If $X=\alpha_i \partial_i$ is an arbitrary
vector field on $M$ then $\tilde{X}=\alpha_i \partial_i + \frac{1}{2\alpha_i}\partial_{i'}$ is an unit vector field at every point of
the zero section on $T^* M$. Then from (\ref{SzaboMatrix}), we see that, the characteristic polynomial 
$P_{\lambda}[\tilde{\mathcal{S}}(\tilde{X})]$ of $\tilde{\mathcal{S}}(\tilde{X})$ is the square of the characteristic polynomial
$P_{\lambda}[\mathcal{S}^{\nabla}(X)]$ of $\mathcal{S}^{\nabla}(X)$. Since for every unit vector field $\tilde{X}$ on $T^* M$ the
characteristic polynomial $P_{\lambda}[\tilde{\mathcal{S}}(\tilde{X})]$ should be the same, it follows that for every vector field
$X$ on $M$ the characteristic polynomial $P_{\lambda}[\mathcal{S}^{\nabla}(X)]$ is the same. Hence $(M,\nabla)$ is affine Szab\'o.
\end{proof}

As an example, we have the following. Let $(M,\nabla)$ be a $3$-dimensional affine manifold. Let $(u_1,u_2,u_3)$ be local coordinates on $M$. We write
$\nabla_{\partial_i} \partial_j = \sum_k \Gamma_{ij}^{k}\partial_k$ for $i,j,k=1,2,3$ to define the coefficients of affine connection $\nabla$. If $\omega \in T^* M$, we write $\omega=u_4du_1 + u_5du_2 + u_6du_3$ to define the dual fiber coordinates $(u_4,u_5,u_6)$, and thereby obtain canonical local coordinates $(u_1,u_2,u_3,u_4,u_5,u_6)$ on $T^* M$. 
The Riemannian extension is the metric of neutral signature $(3,3)$ on the cotangent bundle $T^* M$ locally given by
\begin{eqnarray*}
 g_{\nabla}(\partial_1,\partial_4)&=& g_{\nabla}(\partial_2,\partial_5)=g_{\nabla}(\partial_3,\partial_6)=1,\\
 g_{\nabla}(\partial_1,\partial_1)&=& -2u_4\Gamma_{11}^{1}-2u_5\Gamma_{11}^{2}-2u_6\Gamma_{11}^{3},\\
 g_{\nabla}(\partial_1,\partial_2)&=& -2u_4\Gamma_{12}^{1}-2u_5\Gamma_{12}^{2}-2u_6\Gamma_{12}^{3},\\
 g_{\nabla}(\partial_1,\partial_3)&=& -2u_4\Gamma_{13}^{1}-2u_5\Gamma_{13}^{2}-2u_6\Gamma_{13}^{3},\\
 g_{\nabla}(\partial_2,\partial_2)&=& -2u_4\Gamma_{22}^{1}-2u_5\Gamma_{22}^{2}-2u_6\Gamma_{22}^{3},\\
 g_{\nabla}(\partial_2,\partial_3)&=& -2u_4\Gamma_{23}^{1}-2u_5\Gamma_{23}^{2}-2u_6\Gamma_{23}^{3},\\
 g_{\nabla}(\partial_3,\partial_3)&=& -2u_4\Gamma_{33}^{1}-2u_5\Gamma_{33}^{2}-2u_6\Gamma_{33}^{3}.
 \end{eqnarray*}
 From Example \ref{Exampl1}, the Riemannian extension of the affine connection defined in (\ref{e4}) is the pseudo-Riemannian metric given by
 \begin{eqnarray}\label{metric1}
 g&=& 2 du_1\otimes du_4 + 2 du_2\otimes du_5 + 2du_3\otimes du_6 \nonumber\\
 &\quad & -2(u_1u_3u_5)du_1\otimes du_1 -2(u_1 + u_3)u_5du_3\otimes du_3.
\end{eqnarray}
This metric leads to the following result. 
\begin{proposition}
The metric in (\ref{metric1}) is Szab\'o of signature $(3,3)$ with zero eigenvalues. Moreover,
it is not locally symmetric.
\end{proposition}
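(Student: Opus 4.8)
The plan is to split the statement into its two assertions and dispatch each with the machinery already assembled in this section. The signature claim is immediate: the Riemannian extension $g_{\nabla}$ of the $3$-dimensional affine manifold $(M,\nabla)$ carries, by construction, the neutral signature $(n,n)=(3,3)$, so no computation is needed there.

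For the Szab\'o property with vanishing eigenvalues, I would invoke the preceding Theorem together with Example \ref{Exampl1}. Since the connection (\ref{e4}) is affine Szab\'o, the Theorem guarantees that its Riemannian extension $(T^*M,g_{\nabla})$ --- which is precisely the metric (\ref{metric1}) --- is a pseudo-Riemannian Szab\'o manifold. To pin down that every eigenvalue is zero, I would appeal to the block-triangular form recorded in the proof of that Theorem and to the factorization (\ref{SzaboMatrix}): because the affine Szab\'o operator $\mathcal{S}^{\nabla}(X)$ is nilpotent with characteristic polynomial $\lambda^3$ (Proposition \ref{p1} and the matrix (\ref{e3})), the factorization yields $P_{\lambda}[\tilde{\mathcal{S}}(\tilde{X})] = \lambda^{3}\cdot\lambda^{3} = \lambda^{6}$, so $\mathrm{Spec}(\tilde{\mathcal{S}}(\tilde{X})) = \{0\}$ for every $\tilde{X}$.

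The substantive part is the failure of local symmetry, and here I would reduce the $6$-dimensional problem on $T^*M$ to a $3$-dimensional one on $M$. By the equivalence recalled in Section \ref{prel}, $(T^*M,g_{\nabla})$ is locally symmetric if and only if $(M,\nabla)$ is affine locally symmetric, i.e. $\nabla\mathcal{R}^{\nabla}=0$. It therefore suffices to exhibit a single non-vanishing component of $\nabla\mathcal{R}^{\nabla}$ for the connection (\ref{e4}). The most economical route is to observe that the affine Szab\'o operator itself, $\mathcal{S}^{\nabla}(X)Y = (\nabla_X\mathcal{R}^{\nabla})(Y,X)X$, is a particular contraction of $\nabla\mathcal{R}^{\nabla}$; hence if $\mathcal{S}^{\nabla}(X)\neq 0$ for some $X$, then $\nabla\mathcal{R}^{\nabla}\neq 0$ and local symmetry fails. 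From the matrix (\ref{e3}), $\mathcal{S}^{\nabla}(X)$ is governed by the two entries $a$ and $c$, so the whole argument comes down to computing these for $f_1 = u_1 u_3$, $f_2 = 0$, $f_3 = u_1 + u_3$ and checking that at least one of them is not identically zero.

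The main obstacle is precisely this last computation: one must carry out the explicit evaluation of the curvature $\mathcal{R}^{\nabla}$ of (\ref{e4}), differentiate it once to read off $a$ and $c$, and confirm non-vanishing. This is routine but must be handled carefully, since the nilpotency of $\mathcal{S}^{\nabla}(X)$ (characteristic polynomial $\lambda^3$) means the operator is a nonzero nilpotent matrix rather than an invertible one, so it cannot be detected through its eigenvalues --- only through the explicit entries. Once a nonzero value of $a$ or $c$ is produced, the implications $\nabla\mathcal{R}^{\nabla}\neq 0$, hence $(M,\nabla)$ is not affine locally symmetric, hence $(T^*M,g_{\nabla})$ is not locally symmetric, close the proof.
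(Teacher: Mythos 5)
Your proof is correct, but it takes a genuinely different route from the paper's. The paper works entirely upstairs on $(\mathbb{R}^6,g_{\nabla})$: it computes the nonvanishing curvature components of the metric (\ref{metric1}) explicitly, writes out the full $6\times 6$ matrix of the Szab\'o operator $\mathcal{S}(X)$ to read off $P_{\lambda}(\mathcal{S}(X))=\lambda^6$ directly, and then refutes local symmetry by exhibiting the nonzero component $(\nabla_{\partial_1}R)(\partial_1,\partial_3,\partial_5,\partial_1)=1$ of $\nabla R$ on the cotangent bundle itself. You instead argue structurally: the Szab\'o property with spectrum $\{0\}$ follows from the preceding equivalence theorem together with the factorization (\ref{SzaboMatrix}) and the nilpotency $P_{\lambda}[\mathcal{S}^{\nabla}(X)]=\lambda^3$ from Proposition \ref{p1}, and the failure of local symmetry is pushed down to the base via the equivalence, recalled in Section \ref{prel}, that $(M,\nabla)$ is locally symmetric if and only if $(T^*M,g_{\nabla})$ is. Your approach buys economy and reuses the section's machinery (no $6$-dimensional curvature computation at all); the paper's approach buys explicit curvature data for the neutral-signature metric, which it displays anyway. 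The one piece you leave open --- verifying $a$ or $c$ in (\ref{e3}) is not identically zero for $f_1=u_1u_3$, $f_2=0$, $f_3=u_1+u_3$ --- does go through: the only nonvanishing covariant derivative of the curvature of (\ref{e4}) is $(\nabla_{\partial_1}\mathcal{R}^{\nabla})(\partial_1,\partial_3)\partial_1=-\partial_2$, which for $X=\sum_i\alpha_i\partial_i$ yields $a=-\alpha_1^2\alpha_3$ and $c=\alpha_1^3$, nonzero whenever $\alpha_1\neq 0$; this confirms $\nabla\mathcal{R}^{\nabla}\neq 0$ and closes your argument. (Equally, this single component already shows $(M,\nabla)$ is not affine locally symmetric, so the detour through the Szab\'o operator's entries, while valid, is not even needed.)
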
 
 \begin{proof}
 The non-vanishing components of the curvature tensor of $(\mathbb{R}^6,g_{\nabla})$ are given by
 $R(\partial_1, \partial_3 )\partial_1  =   - u_1\partial_2$, $R(\partial_1, \partial_3 )\partial_3 = \partial_2$, $R(\partial_1,\partial_3)\partial_5 = u_1\partial_4 - \partial_6$, $R(\partial_1, \partial_5 )\partial_1  =  - u_1 \partial_6$, $R(\partial_1, \partial_5 )\partial_3 = u_1 \partial_4$, $R(\partial_3, \partial_5)\partial_1 = \partial_6$, $ R(\partial_3, \partial_5 )\partial_3  =  - \partial_4$.
Let $\displaystyle X = \sum_{i=1}^{6}\alpha_i\partial_i$ be a non-zero vector on $\mathbb{R}^6$. Then the matrix associated with the Szab\'o operator $\mathcal{S} (X) := (\nabla_X \mathcal{R})(\cdot,X)X$ is given by
\begin{equation*}
(\mathcal{S} (X)) = 
\left(
      \begin{array}{cccccc}
      0&0&0&0&0&0\\
      -1&0&1&0&0&0\\
      0&0&0&0&0&0\\
      2&0&-1&0&-1&0\\
      0&0&0&0&0&0\\
      -1&0&1&0&1&0
       \end{array}
\right).
\end{equation*}
Hence the characteristic polynomial of the Szab\'o operators is $P_{\lambda}(S(X))=\lambda^6$. Since one of the components of $\nabla R$,
$
 (\nabla_{\partial_1} R)(\partial_1,\partial_3,\partial_5,\partial_1) =1
$
is non-zero, the metric in (\ref{metric1}) is not locally symmetric. The proof is completed.
\end{proof}

\subsection{Twisted Riemannian extensions of an affine Szab\'o connection}

In this subsection, we study the twisted Riemannian extensions which is a generalization of classical Riemannian extensions. We have following result.

\begin{theorem} 
Let $(T^*M,g_{\nabla,\phi})$ be the cotangent bundle of an affine manifold $(M,\nabla)$ equipped with the 
twisted Riemannian extension. 

Then $(T^*M,g_{(\nabla,\phi)})$ is a pseudo-Riemannian Szab\'o manifold if and
only $(M,\nabla)$ is affine Szab\'o for any symmetric $(0,2)$-tensor field $\phi$.
\end{theorem}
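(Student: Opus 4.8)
The plan is to mimic the structure of the proof for the classical Riemannian extension theorem, since the twisted extension $g_{(\nabla,\phi)}$ differs from $g_{\nabla}$ only by the symmetric term $\phi_{ij}(u)$ in the upper-left block. The essential point is that this additional term, being a function on the base $M$ alone, should not alter the block-triangular structure of the Szab\'o operator. First I would take an arbitrary vector field $\tilde{X}=\alpha_i\partial_i + \alpha_{i'}\partial_{i'}$ on $T^*M$ and compute the matrix of the Szab\'o operator $\tilde{\mathcal{S}}(\tilde{X})$ with respect to the adapted frame $\{\partial_i,\partial_{i'}\}$, using the Christoffel symbols $\widetilde{\Gamma}^{\gamma}_{\alpha\beta}$ and the curvature components $\widetilde{R}^{h}_{kji}$, $\widetilde{R}^{h'}_{kji'}=-R^{i}_{kjh}$, $\widetilde{R}^{h'}_{k'ji}=R^{k}_{hij}$ listed in Section \ref{prel}. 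The key structural claim to establish is that, exactly as in the untwisted case, this matrix takes the block-triangular form
\begin{eqnarray}
\tilde{\mathcal{S}}(\tilde{X}) = \left(\begin{array}{cc}
                                         \mathcal{S}^{\nabla}(X)&0\\
                                         *& {}^t\mathcal{S}^{\nabla}(X)
                                        \end{array}
\right),\nonumber
\end{eqnarray}
so that the characteristic polynomials satisfy $P_{\lambda}[\tilde{\mathcal{S}}(\tilde{X})]=P_{\lambda}[\mathcal{S}^{\nabla}(X)]\cdot P_{\lambda}[{}^t\mathcal{S}^{\nabla}(X)]$.

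The heart of the argument is verifying that the upper-right block vanishes and that the diagonal blocks are genuinely $\mathcal{S}^{\nabla}(X)$ and its transpose, \emph{independently of $\phi$}. The reason I expect this to hold is that the curvature components above show no dependence on $\phi$ whatsoever: the term $\phi_{ij}$ enters $g_{(\nabla,\phi)}$ only through $\widetilde{\Gamma}^{k'}_{ij}$, and there it contributes a quantity $\tfrac{1}{2}(\partial_i\phi_{jk}+\partial_j\phi_{ik}-\partial_k\phi_{ij}) - \sum_l \phi_{kl}\Gamma^{l}_{ij}$ that is a function on the base $M$ alone. I would argue that when one forms $(\nabla_{\tilde{X}}\widetilde{R})(\tilde{Y},\tilde{X})\tilde{X}$, the $\phi$-contribution either drops out of the curvature entirely (as the displayed components make explicit) or, after covariant differentiation, populates only the lower-left starred block, which plays no role in the characteristic polynomial. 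Thus the two diagonal blocks coincide with those of the classical case and depend only on $\nabla$.

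With the factorization in hand, the equivalence follows exactly as before. If $(M,\nabla)$ is affine Szab\'o, then by the first theorem of Section \ref{Szabo} the operator $\mathcal{S}^{\nabla}(X)$ has characteristic polynomial $\lambda^n$ for every $X$, hence $P_{\lambda}[{}^t\mathcal{S}^{\nabla}(X)]=\lambda^n$ as well, and the factorization gives $P_{\lambda}[\tilde{\mathcal{S}}(\tilde{X})]=\lambda^{2n}$ for all $\tilde{X}$, so $(T^*M,g_{(\nabla,\phi)})$ is nilpotent Szab\'o regardless of the choice of $\phi$. Conversely, given a vector field $X=\alpha_i\partial_i$ on $M$ with all $\alpha_i\neq 0$, I would lift it to a unit vector field $\tilde{X}=\alpha_i\partial_i + \tfrac{1}{2\alpha_i}\partial_{i'}$ along the zero section and invoke the constancy of $P_{\lambda}[\tilde{\mathcal{S}}(\tilde{X})]$ on the pseudo-spheres; since this polynomial is the square of $P_{\lambda}[\mathcal{S}^{\nabla}(X)]$, its constancy forces $P_{\lambda}[\mathcal{S}^{\nabla}(X)]$ to be the same for every $X$, whence $(M,\nabla)$ is affine Szab\'o.

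The main obstacle I anticipate is the careful bookkeeping needed to confirm the block-triangular shape and, in particular, to show that differentiating the curvature via $\widetilde{\Gamma}$ introduces no $\phi$-dependence into the diagonal blocks. Because $\widetilde{\Gamma}^{k'}_{ij}$ carries the $\phi$-terms, one must check that every contraction feeding into the $(\partial_i,\partial_j)$ and $(\partial_{i'},\partial_{j'})$ diagonal entries of $\nabla_{\tilde{X}}\widetilde{R}$ either avoids $\widetilde{\Gamma}^{k'}_{ij}$ or lands in the lower-left block; this is the one place where a slip could secretly let $\phi$ influence the spectrum. Once that is verified, the rest of the proof is formally identical to the classical case, and indeed the theorem is slightly stronger, asserting the equivalence uniformly in $\phi$.
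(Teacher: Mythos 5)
Your proposal is essentially correct, and it is worth noting at the outset that the paper itself states this theorem \emph{without proof} (it passes directly to the example in (\ref{metric2})), evidently regarding it as the obvious adaptation of the proof given for the classical Riemannian extension. Your argument is exactly that adaptation, and its key structural claim checks out against the data in Section \ref{prel}: the tensor $\phi$ enters the Levi-Civita connection only through $\widetilde{\Gamma}^{k'}_{ij}$, and the curvature components listed there show that the only $\phi$-dependent block is the omitted $\widetilde{R}^{h'}_{kji}$, which maps horizontal vectors to vertical ones and so feeds only the lower-left starred block. One can see the block-triangularity more conceptually: the vertical distribution spanned by the $\partial_{i'}$ is null and parallel (no Christoffel symbol of the form $\widetilde{\Gamma}^{k}_{\alpha\beta}$ has a primed lower index), so $\nabla_{\tilde{X}}\widetilde{R}$ preserves it, and since the only Christoffel symbols with unprimed upper index are $\widetilde{\Gamma}^{k}_{ij}=\Gamma^{k}_{ij}$, the induced operator on the horizontal quotient is literally $\mathcal{S}^{\nabla}(X)$ with no $\phi$-contamination; self-adjointness of the Szab\'o operator with respect to the neutral pairing then forces the vertical block to be ${}^t\mathcal{S}^{\nabla}(X)$.

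One slip to repair in the converse direction: you copied the lift $\tilde{X}=\alpha_i\partial_i+\frac{1}{2\alpha_i}\partial_{i'}$ from the classical proof, but for the twisted extension the zero section is no longer totally null in the relevant sense --- there $g_{(\nabla,\phi)}(\partial_i,\partial_j)=\phi_{ij}$, so your lift has squared norm $\phi(X,X)+\sum_i 1$ rather than $1$. The fix is one line: since the diagonal blocks of $\tilde{\mathcal{S}}(\tilde{X})$ depend only on the horizontal part $X$ and not on the vertical component, you are free to choose the vertical coefficients $\beta_i$ to solve $2\sum_i\alpha_i\beta_i = 1-\phi(X,X)$, producing a genuinely unit lift with the same factorization $P_{\lambda}[\tilde{\mathcal{S}}(\tilde{X})]=\bigl(P_{\lambda}[\mathcal{S}^{\nabla}(X)]\bigr)^{2}$, after which your constancy argument goes through. (To be fair, the paper's own classical proof commits the same normalization sloppiness --- its displayed lift has squared norm $n$, not $1$ --- so this is a defect you inherited rather than introduced.) With that correction, your proof is complete and, as you observe, establishes the equivalence for each fixed $\phi$, which is the precise content behind the theorem's loosely worded ``for any symmetric $(0,2)$-tensor field $\phi$.''
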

As an example we have the following.
\begin{example}{\rm
Let us consider the twisted Riemannian extensions of the affine connection $\nabla$ given in Example \ref{Exampl1}. This is given by  
\begin{align}\label{metric2}
 g&=  2du_1\otimes du_4 + 2du_2\otimes du_5 + 2du_3\otimes du_6  + 2\phi_{12}du_1\otimes du_2 \nonumber\\
 &+ 2\phi_{13}du_1\otimes du_3+ 2\phi_{23}du_2\otimes du_3 +(\phi_{11} -2u_1u_3u_5)du_1\otimes du_1\nonumber\\
 & +\phi_{22}du_2\otimes du_2   +[\phi_{33}-2(u_1 + u_3)u_5]du_3\otimes du_3, 
\end{align}
 where $(u_1,u_2,\cdots,u_6)$ are coordinates in $\mathbb{R}^6$.
The non-zero Christoffel symbols are as follows:
 \begin{align}
 \Gamma^{2}_{11} &= -\Gamma^{4}_{15}= u_{1}u_{3}, \;\;
 \Gamma^{4}_{11} = \frac{1}{2}\partial_{1}\phi_{11}-u_{3}u_{5}-u_{1}u_{3}\phi_{12},\nonumber\\
 \Gamma^{5}_{11} &=  \partial_{1}\phi_{12}-\frac{1}{2}\partial_{2}\phi_{11}-u_{1}u_{3}\phi_{22},\;\;
 \Gamma^{6}_{11} = \partial_{1}\phi_{13}+u_{5}u_{1}-\frac{1}{2}\partial_{3}\phi_{11}-u_{1}u_{3}\phi_{22},\nonumber\\ 
 \Gamma^{4}_{12} &=  \frac{1}{2}\partial_{2}\phi_{11},\;\;
 \Gamma^{5}_{12} = \frac{1}{2}\partial_{1}\phi_{22},\;\;
 \Gamma^{6}_{12} = \frac{1}{2}\{\partial_{2}\phi_{13}+\partial_{1}\phi_{23}-\partial_{3}\phi_{12}\},\nonumber\\  
 \Gamma^{4}_{13} &=  -u_{5}u_{1} + \frac{1}{2}\partial_{3}\phi_{11},\;\;
 \Gamma^{5}_{13} = \frac{1}{2}\{\partial_{3}\phi_{12}+\partial_{1}\phi_{32}-\partial_{2}\phi_{13}\},\nonumber\\  
 \Gamma^{6}_{13} &= -u_{5}+\frac{1}{2}\partial_{1}\phi_{33},\;\;
 \Gamma^{4}_{22}  =  \partial_{2}\phi_{21}-\frac{1}{2}\partial_{1}\phi_{22},\;\;
 \Gamma^{5}_{22} = \frac{1}{2}\partial_{2}\phi_{22},\nonumber \\ 
 \Gamma^{6}_{22}  &= \partial_{2}\phi_{23}-\frac{1}{2}\partial_{3}\phi_{22}, 
 \Gamma^{4}_{23}  =  \frac{1}{2}\{\partial_{3}\phi_{21}+\partial_{2}\phi_{31}-\partial_{1}\phi_{23}\},\nonumber
 \end{align}
  \begin{align} 
 \Gamma^{5}_{23}& = \frac{1}{2}\partial_3 \phi_{22},\;\;
 \Gamma^{6}_{23}  = \frac{1}{2}\partial_{2}\phi_{33}, \;\;
 \Gamma^{2}_{33}  = -\Gamma^{6}_{35} = (u_{1}+u_{3}),\nonumber\\  
 \Gamma^{4}_{33}& = \partial_{3}\phi_{31}+u_{5}-\frac{1}{2}\partial_{1}\phi_{33}-(u_{1}+u_{3})\phi_{12},\nonumber\\
 \Gamma^{5}_{33} &=  \partial_{3}\phi_{32}-\frac{1}{2}\partial_{2}\phi_{33}-(u_{1}+u_{3})\phi_{22},\;\;
 \Gamma^{5}_{33} = -u_{5} + \partial_{3}\phi_{33} - (u_{1} + u_{3})\phi_{32}.\nonumber
 \end{align}
For $\displaystyle X=\sum_{i=1}^{6}\alpha_i\partial_i$, by a straightforward calculation the characteristic polynomial associated with the Szab\'o operator is $P_{\lambda}[S(X)]=\lambda^6$. So, $(M,g_{\nabla,\phi})$ is a pseudo-Riemannian Szab\'o metric of signature $(3,3)$ with zero eigenvalue.}
\end{example}

\section*{Acknowledgments}

The authors would like to thank the referee for his/her valuable suggestions and comments that helped them improve the paper.

\end{document}